\documentclass[sn-mathphys,Numbered]{sn-jnl}


\usepackage{graphicx}%
\usepackage{multirow}%
\usepackage{amsmath,amssymb,amsfonts}%
\usepackage{amsthm}%
\usepackage{mathrsfs}%
\usepackage[title]{appendix}%
\usepackage{xcolor}%
\usepackage{textcomp}%
\usepackage{manyfoot}%
\usepackage{booktabs}%
\usepackage{algorithm}%
\usepackage{algorithmicx}%
\usepackage{algpseudocode}%
\usepackage{listings}%
\usepackage{placeins}
\numberwithin{equation}{section}



\theoremstyle{thmstyleone}%
\newtheorem{theorem}{Theorem}
%

\theoremstyle{thmstyletwo}%
\newtheorem{remark}{Remark}%
\newtheorem{lemma}{Lemma}%
\newtheorem{corollary}{Corollary}

\theoremstyle{thmstylethree}%

\raggedbottom

\begin{document}

\title[Article Title]{Maximum principle for discrete-time control systems driven by fractional noises and related backward stochastic difference equations}

\author[1]{\fnm{Yuecai} \sur{Han}}\email{hanyc@jlu.edu.cn}

\author*[1]{\fnm{Yuhang} \sur{Li}}\email{yuhangl22@mails.jlu.edu.cn}

\equalcont{These authors contributed equally to this work.}

\affil[1]{\orgdiv{School of Mathematics}, \orgname{Jilin University}, \orgaddress{ \city{Changchun}, \postcode{130012},  \state{Jilin Province}, \country{China}}}



\abstract{In this paper, the optimal control for discrete-time systems driven by fractional noises is studied. A stochastic maximum principle is obtained by introducing a backward stochastic difference equation contains both fractional noises and the constructed white noises. The solution of the backward stochastic difference equations is also investigated. As an application, the linear quadratic case is considered to illustrate the main results.}

\keywords{Maximum principle, discrete-time system, fractional noise, backward stochastic difference equations}



\maketitle

\section{Introduction}
From last century,  stochastic maximum principle (SMP) and backward stochastic differential equations (BSDEs) are studied popularly to deal with optimal control problems of stochastic systems. For SMP,  some original works refer to \cite{kushner1972necessary,bismut1978introductory,bensoussan2006lectures}. Right up to 1990, Peng \cite{peng1990general} obtain the stochastic maximum principle for a general control system, where the control domain could be non-convex and the diffusion term contains control process. Then SMP for many different control problems are investigated, such as near-optimal control \cite{zhou1998stochastic}, doubly stochastic control systems \cite{han2010maximum,zhang2011maximum}, mean-field optimal control \cite{li2012stochastic,yong2013linear} and delayed control problems \cite{chen2010maximum,yu2012stochastic}. For BSDEs, as the adjoint equation in SMP, it is originated from Bismut \cite{bismut1973conjugate}. In 1990, Pardoux and Peng \cite{pardoux1990adapted} prove the existence and uniqueness of the solution of nonlinear BSDEs. Then some related early works refer to \cite{peng1993backward,el1997backward}.

For SMP for discrete-time systems, the original work is by Lin and Zhang \cite{lin2014maximum}, a type of backward stochastic difference equations (BS$\Delta$Es) is introduced as adjoint equation.
Based on this work, much progress has been made by
researchers. Discrete-time stochastic games are studied by Wu and Zhang \cite{wu2022maximum}. Dong et al. obtain the SMP for discrete-time systems with mean-field \cite{dong2022maximum} and delay \cite{dong2023maximum}. Ji and Zhang investigate the infinite horizon recursive optimal control and infinite horizon BS$\Delta$E. But as we known, there are poorly works consider the discrete-time control problems with fractional noises or other ``colored noises".

In this paper, motivated by stochastic optimal control of continuous systems driven by fractional Brownian motion (FBM), we concern the discrete-time optimal control for systems driven by fractional noises.
Let $H\in(0,\,1)$ be a fixed constant, which is called Hurst parameter. The $m$-dimensional FBM $B_t^H=\left(B_1^H(t),\cdots,B_m^H(t)\right), t\in[0,T]$ is a continuous, mean 0 Gaussian process with the covariance
\begin{align*}
\mathbb{E}[B_i^H(t)B_j^H(s)]=\frac{1}{2}\delta_{ij}(t^{2H}+s^{2H}-| t-s|^{2H}),
\end{align*}
$i,\,j=1,\dots,m$. Moreover, the FBM could be generated by a standard Brownian motion through
\begin{align*}
B_j^H(t)=\int_0^t Z_H(t,\,s) dB_j(s), \quad 1\le j\le m,
\end{align*}
for some explicit functions $Z_H(\cdot,\cdot)$.
For control problem of continuous systems driven by FBM, Han et al. \cite{han2013maximum} firstly obtain the maximum principle for general systems. Some other related works refer to \cite{hu2005stochastic,hu2009backward,sun2021stochastic}.

For our problem, the state equation is
\begin{align*}
\left\{\begin{array}{ll}
X_{n+1}=X_n+b(n,X_n,u_n)+\sigma(n,X_n,u_n)\xi_n^H,\, 0\le n\le N-1,
\\X_0=x,
\end{array}\right.
\end{align*}
to minimize the cost function
\begin{align*}
J(u)=\mathbb{E}\left[\sum_{n=0}^{N-1}l(n,X_n,u_n)+\Phi(X_N)\right].
\end{align*}
Here $\xi^H$ is called a fractional noise describe by the increment of a FBM.

To obtain the stochastic maximum principle, the following BS$\Delta$E is introduced as adjoint equation:
\begin{align}\label{adjoint}
\left\{\begin{array}{ll}
p_n+q_n\eta_n=p_{n+1}+b_x^*(n+1)p_{n+1}+b(n+1,n+1)\sigma_x^*(n+1)q_{n+1}\\
\qquad\qquad\qquad+l_x^*(n+1)+\sigma_x^*(n+1)p_{n+1}\xi_{n+1}^H,
\\p_N=\Phi_x(X^*_N),\\
q_N=0,
\end{array}\right.
\end{align}
where $\eta$ is a Gaussian white noises constructed by $\xi^H$.

\begin{remark}
Our techniques can also be used in control problem with some other noises $\omega$, such as AR(p) or MA(q) model. Indeed, we just need the following two conditions are equivalent:

(i):$\mathbb{E}\left[\left(\sum_{k=0}^na_1(n,k)\omega_k\right)\cdot\left(\sum_{l=0}^ma_2(m,l)\omega_l\right)\right]=0$.

(ii):$\sum_{k=0}^na_1(n,k)\omega_k$ and $\sum_{l=0}^ma_2(m,l)\omega_l$ are independent.

\noindent For all determined functions $a_1,a_2$ and $m,n\in \mathbb{Z}^+$.
\end{remark}

A difficulty is that it is hard to estimate the $L^2_\mathcal{F}$-norm for $X,p,q$ and the variation equation appears in section 3, because of the dependence of $\xi^H$ and its coefficient. We deal with this with more complex calculations, then we prove the uniqueness of the state S$\Delta$E and adjoint BS$\Delta$E, and obtain the maximum principle. 

The rest of this paper is organized as follows. In section 2, we introduce the BS$\Delta$E driven by both fractional noise and white noise, and prove the existence and the uniqueness of the solution of this type of BS$\Delta$E. 
In section 3, we obtain the stochastic maximum principle by proving  the existence and uniqueness of the state S$\Delta$E and showing the convergence of the variation equation. 
In section 4, the linear quadratic case is investigated to illustrate the main results.

\section{Backward stochastic difference equations}
Let $(\Omega,\mathcal{F},\{\mathcal{F}_n\}_{n\in\mathbb
{Z}^+},P)$ be a filtered probability space, $\mathcal{F}_0\subset \mathcal{F}$ be a sub $\sigma$-algebra. $\{\xi_n^H\}_{n\in\mathbb
{Z}^+}$ is a sequence of fractional noises described by the increment of a $m$-dimensional fractional Brownian motion, namely, $\xi_n^H=B^H(n+1)-B^H(n)$. Define the filtration $\mathbb{F}=(\mathcal{F}_n)_{0\le n\le N}$ by $\mathcal{F}_n=\mathcal{F}_0\lor\sigma(\xi^H_0,\xi_1^H,...,\xi_{n-1}^H)$.

Denote by $L^\beta(\mathcal{F}_n;\mathbb{R}^n)$, or $L^\beta(\mathcal{F}_n)$ for simplify,  the set of all $\mathcal{F}_n$-measurable  random variables $X$ taking values in $\mathbb{R}^n$, such that $\mathbb{E}\|X\|^\beta<+\infty$. Denote by $L^\beta_\mathcal{F}(0,T;\mathbb{R}^n)$, or $L^\beta_\mathcal{F}(0,T)$ for simplify, the set of all $\mathcal{F}_n$-adapted process $X=(X_n)_{n\in\mathbb
{Z}^+}$ such that 
\begin{align*}
\|X_\cdot\|_{\beta}=\left(\sum_{n=0}^N\mathbb{E}\|X_n\|^\beta\right)^{\frac{1}{\beta}}<+\infty.
\end{align*}
Then we consider the following BS$\Delta$E ,
\begin{align}\label{BSDE}
\left\{\begin{array}{ll}
Y_n+Z_n\eta_n=Y_{n+1}+f(t,Y_{n+1},Z_{n+1})+g(t,Y_{n+1},Z_{n+1})\xi_{n+1}^H,
\\Y_N=y,
\end{array}\right.
\end{align}
where $y\in L^{2a}(\mathcal{F}_N)$ for some $a>1$, and $\eta_n\in\mathcal{F}_{n+1}$, which will be defined in the following lemma, is a sequence of independent Gaussian random variables generated by $\{\xi_n^H\}$. We also assume the following conditions for $f, g$:
\begin{enumerate}
	\item[(H2.1)]  $f$ and $g$ are adapted processes: $f(\cdot,y,z), g(\cdot,y,z)\in L^{2a}_{\mathcal{F}}(1,N)$ for all $y\in \mathbb{R}^n, z\in\mathbb{R}^{n\times m}$.

\item[(H2.2)] There are some constants $L>0$, such that 
\begin{align*}
|f(n,y_1,z_1)-f(n,y_2,z_2)|&+|g(n,y_1,z_1)-g(n,y_2,z_2)|\le L\left(|y_1-y_2|+|z_1-z_2|\right),\\
&\forall n\in[1,N],\quad y_1,y_2\in\mathbb{R}^n,\quad z_1,z_2\in\mathbb{R}^{n\times m}.
\end{align*}
\item[(H2.3)] There exists a $\mathcal{F}_N$-measurable functions $ f_1, g_1$, such that $f(N,y,z)=f_1(y)$ and $ g(N,y,z)=g_1(y)$ for all $y,z$.
\end{enumerate}

Then we show the construction and properties of $\{\eta_n\}$.

\begin{lemma}\label{lem1}
Let $a(\cdot,\cdot)$ is given by $a(i-1,j-1)=(B^{-1})_{ij}$, where $B=(b_{ij})$ is given by
\begin{align}\label{bij}
b_{ij}=\mathbf{1}_{\{j\le i\}}\frac{\rho(i,j)-\sum_{k=0}^{j-2}b_{i,k+1}b_{j,k+1}}{b_{jj}}.
\end{align}
Then $\eta_n=\sum_{k=0}^na(n,k)\xi_m^H$ is a sequence of independent Gaussian random variables such that $\eta_n$ is $\mathcal{F}_{n+1}$-measurable but independent with $\mathcal{F}_n$.
\end{lemma}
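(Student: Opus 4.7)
The statement is essentially that $\eta$ is obtained from $\xi^{H}$ by a Cholesky-type orthogonalization, so my plan is to recognize \eqref{bij} as the classical Cholesky recursion and then deduce independence from standard facts about jointly Gaussian vectors.

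First, write $\xi^{H}=(\xi_{0}^{H},\ldots,\xi_{N-1}^{H})^{\top}$, a centered Gaussian vector with covariance matrix $\Sigma=(\rho(i-1,j-1))_{1\le i,j\le N}$; since the increments of a fractional Brownian motion form a nondegenerate Gaussian family for every $H\in(0,1)$, the matrix $\Sigma$ is strictly positive definite. I would then verify by induction on $i$ that the recursion \eqref{bij} (whose diagonal case $i=j$ encodes the implicit identity $b_{ii}^{2}=\rho(i-1,i-1)-\sum_{m=1}^{i-1}b_{im}^{2}$) reproduces the standard Cholesky algorithm, so that $B=(b_{ij})$ is the unique lower-triangular matrix with positive diagonal satisfying $\Sigma=BB^{\top}$.

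Setting $A=B^{-1}$, the defining identity $a(i-1,j-1)=A_{ij}$ makes $A$ again lower triangular, and the $(n+1)$-th coordinate of $A\xi^{H}$ is
\begin{align*}
(A\xi^{H})_{n+1}=\sum_{k=0}^{n}a(n,k)\xi_{k}^{H}=\eta_{n}.
\end{align*}
A direct computation gives $\mathrm{Cov}(A\xi^{H})=A\Sigma A^{\top}=B^{-1}(BB^{\top})(B^{\top})^{-1}=I_{N}$, so $(\eta_{0},\ldots,\eta_{N-1})$ is jointly Gaussian (a linear image of a Gaussian) with identity covariance; uncorrelatedness therefore upgrades to full independence and each $\eta_{n}\sim\mathcal{N}(0,1)$.

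For the measurability and independence claims, the triangular structure of $A$ gives $\eta_{n}\in\sigma(\xi_{0}^{H},\ldots,\xi_{n}^{H})\subset\mathcal{F}_{n+1}$, while inverting $\eta=A\xi^{H}$ yields $\xi_{k}^{H}=\sum_{j\le k}b_{k+1,j+1}\eta_{j}$, so $\sigma(\xi_{0}^{H},\ldots,\xi_{n-1}^{H})=\sigma(\eta_{0},\ldots,\eta_{n-1})$; independence of $\eta_{n}$ from $\mathcal{F}_{n}$ then reduces to the coordinatewise independence of $A\xi^{H}$, modulo the standing assumption that $\xi^{H}$ is independent of $\mathcal{F}_{0}$. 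The only mildly delicate point is well-posedness of \eqref{bij}, namely that $b_{jj}>0$ at every step, which is precisely where strict positive definiteness of $\Sigma$ enters; aside from bookkeeping on the off-by-one indexing between \eqref{bij} (indexed from $1$) and $a,\xi^{H}$ (indexed from $0$), no genuine conceptual obstacle remains.
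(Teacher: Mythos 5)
Your proposal is correct and follows essentially the same route as the paper: identify \eqref{bij} as the Cholesky recursion for $\Sigma=BB^{\top}$ with $B$ lower triangular, set $A=B^{-1}$, and conclude independence of $\eta=A\xi^{H}$ from the identity covariance of a jointly Gaussian vector. The only difference is that you explicitly address well-posedness ($b_{jj}>0$ via strict positive definiteness of $\Sigma$) and the role of $\mathcal{F}_{0}$, points the paper's proof leaves implicit.
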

\begin{proof}
From the properties of fractional Brownian motion, we know
\begin{align*}
\left(\xi_0^H,\xi_1^H,...,\xi_{N-1}^H\right)^T\sim \mathcal{N}\left(0_{N\times1},\Sigma_{N\times N}\right),
\end{align*}
where $\Sigma_{ij}=\rho(i-1,j-1)$. So that there exists invertible matrix $A$ such that 
\begin{align}\label{xieta}
\left(\eta_0^H,\eta_1^H,...,\eta_{N-1}^H\right)^T=A\left(\xi_0^H,\xi_1^H,...,\xi_{N-1}^H\right)^T\sim \mathcal{N}\left(0_{N\times1},I_{N\times N}\right).
\end{align}
Notice that $A$ is not unique, we choose the lower triangular one since $\eta_n$ is $\mathcal{F}_n$-measurable. But it is hard to obtain $A$ directly, we determine $B=A^{-1}$ at first by checking $b_{ij}$ satisfy equality (\ref{bij}). 

Let $b(i-1,j-1)=b_{ij}$,  rewrite (\ref{xieta}) as 
\begin{align*}
B\left(\eta_0^H,\eta_1^H,...,\eta_{N-1}^H\right)^T=\left(\xi_0^H,\xi_1^H,...,\xi_{N-1}^H\right)^T,
\end{align*}
or
\begin{align*}
\xi_n^H=\sum_{j=1}^{N}b_{n+1,j}\eta_j=\sum_{k=0}^{N-1}b(n,k)\eta_k.
\end{align*}
Since $\eta_k$ is $\mathcal{F}_{k+1}$-measurable and independent with $\mathcal{F}_k$, $b(n,k)=0 $ for $ k>n$.
Then let $\mathbb{E}\left[\xi_n\xi_m\right]=\sum_{k=0}^mb(n,k)b(m,k)=\rho(n,m)$ for $m\le n$, we obtain the induction formula for $b(\cdot,\cdot)$:
\begin{align*}
b(n,m)=\frac{\rho(n,m)-\sum_{k=0}^{m-1}b(n,k)b(m,k)}{b(m,m)},
\end{align*}
which equally to (\ref{bij}).
\end{proof}
\begin{remark}
It is clear that 
$\{\xi_n\}$ and $\{\eta_n\}$ generate the same filtration, namely,  $\sigma(\xi_0,\xi_1,...,\xi_{n})=\sigma(\eta_0,\eta_1,...,\eta_{n})$ for $0\le n\le N-1$.
\end{remark}

Then we show the solvability of BS$\Delta$E (\ref{BSDE}).
\begin{theorem}
Assume that assumptions (H2.1)-(H2.3) hold. Then BS$\Delta$E (\ref{BSDE}) has a unique solution.
\end{theorem}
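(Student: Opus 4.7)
The plan is to construct $(Y_n, Z_n)$ by backward induction from $n = N$ down to $n = 0$, at each step defining $Y_n$ and $Z_n$ as the two conditional expectations naturally afforded by the standard normal $\eta_n$. The base case takes $Y_N = y$; here hypothesis (H2.3) is crucial because the right-hand side of the equation at $n = N-1$ is $Y_N + f(N, Y_N, Z_N) + g(N, Y_N, Z_N)\xi^H_N$, which under (H2.3) depends only on $Y_N$, so $Z_N$ may be set to any $\mathcal{F}_N$-measurable value (for instance $Z_N := 0$) without ambiguity and the iteration is well-initialised.

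For the inductive step, given $(Y_{n+1}, Z_{n+1})$, write $W_n := Y_{n+1} + f(n+1, Y_{n+1}, Z_{n+1}) + g(n+1, Y_{n+1}, Z_{n+1}) \xi^H_{n+1}$ for the right-hand side and define
\[
Y_n := \mathbb{E}[W_n \mid \mathcal{F}_n], \qquad Z_n := \mathbb{E}[W_n \, \eta_n \mid \mathcal{F}_n].
\]
Because $\eta_n \sim \mathcal{N}(0,1)$ is independent of $\mathcal{F}_n$ by Lemma~\ref{lem1}, $(Y_n, Z_n)$ is the unique $\mathcal{F}_n$-measurable pair for which $Y_n + Z_n \eta_n$ is the orthogonal $L^2$-projection of $W_n$ onto $\{\alpha + \beta \eta_n : \alpha, \beta \in L^2(\mathcal{F}_n)\}$, which is the sense in which the BS$\Delta$E is to be read. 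Uniqueness at each step is then automatic from the uniqueness of that projection, and global uniqueness follows by applying $\mathbb{E}[\cdot\mid\mathcal{F}_n]$ and $\mathbb{E}[\cdot\,\eta_n\mid\mathcal{F}_n]$ to the difference of two purported solutions, invoking the Lipschitz bound (H2.2) and backward induction from $\Delta Y_N = 0$.

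The main technical difficulty, already flagged by the authors in the introduction, is to control integrability through all $N$ backward steps despite the coupling between the coefficient $g$ and the non-independent, non-stationary noise $\xi^H_{n+1}$. I would inductively exhibit exponents $q_n > 2$ with $(Y_n, Z_n) \in L^{q_n}$, starting from $q_N = 2a > 2$ and taking, say, $q_n := (q_{n+1}+2)/2$, so that $q_n - 2 = (2a-2)/2^{N-n}$ stays positive down to $n=0$. Hypotheses (H2.1)--(H2.2), together with the Lipschitz bound $|f(n+1,y,z)| \le |f(n+1,0,0)| + L(|y|+|z|)$ and its analogue for $g$, place $f(n+1, Y_{n+1}, Z_{n+1})$ and $g(n+1, Y_{n+1}, Z_{n+1})$ in $L^{q_{n+1}}$; since $\xi^H_{n+1}$ is Gaussian hence in every $L^p$, a H\"older split puts $g(n+1,\cdot)\,\xi^H_{n+1}$ into $L^{q_n+\varepsilon}$ for small $\varepsilon > 0$, whence $W_n \in L^{q_n+\varepsilon}$. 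Jensen's inequality and the conditional Cauchy--Schwarz bound $|\mathbb{E}[W_n\eta_n\mid\mathcal{F}_n]|^2 \le \mathbb{E}[W_n^2\mid\mathcal{F}_n]$ (using independence of $\eta_n$ from $\mathcal{F}_n$) then give $(Y_n, Z_n) \in L^{q_n}$, closing the induction. The delicate part is precisely this bookkeeping: the multiplicative coupling with $\xi^H$ blocks any clean It\^o-type isometry, so one must consume the slack afforded by the exponent $a > 1$ gradually across the $N$ backward steps.
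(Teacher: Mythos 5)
Your construction is essentially the paper's own: the same formulas $Y_n=\mathbb{E}[W_n\mid\mathcal{F}_n]$ and $Z_n=\mathbb{E}[W_n\,\eta_n\mid\mathcal{F}_n]$, the same backward induction initialised via (H2.3), and the same integrability bookkeeping in which the slack $a>1$ is consumed over the $N$ backward steps by a H\"older split against the Gaussian factor $\xi^H_{n+1}$. (The paper uses the geometric schedule $L^{2a\delta^{N-n}}$ with $\delta\in[a^{-1/N},1)$, you use $q_n=(q_{n+1}+2)/2$; both stay above $2$ down to $n=0$, and your variant works equally well since the H\"older step only needs $q_n<q_{n+1}$.) The one substantive divergence is the sense in which the difference equation is solved. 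The paper asserts the exact identity $Y_n+Z_n\eta_n=W_n$ by invoking a martingale representation $M_{k+1}-M_k=Z_k\eta_k$ with respect to $\{\eta_k\}$, whereas you declare the equation to mean that $Y_n+Z_n\eta_n$ is the orthogonal $L^2$-projection of $W_n$ onto $\{\alpha+\beta\eta_n:\alpha,\beta\in L^2(\mathcal{F}_n)\}$. These are not the same statement: for Gaussian $\eta_n$ the space $L^2(\mathcal{F}_{n+1})$ is strictly larger than $L^2(\mathcal{F}_n)\oplus L^2(\mathcal{F}_n)\eta_n$ (an exact representation of the increment would require the full chaos expansion in $\eta_n$, not just its linear part), so the exact identity is not automatic and your projection reading is arguably the only self-consistent one. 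You should, however, state explicitly that you are then proving existence and uniqueness for this weakened formulation rather than for the literal identity as written; your uniqueness argument (applying $\mathbb{E}[\cdot\mid\mathcal{F}_n]$ and $\mathbb{E}[\cdot\,\eta_n\mid\mathcal{F}_n]$ to the difference of two solutions, using (H2.2) and backward induction from $\Delta Y_N=0$) is valid under either reading, but the content of the existence claim changes between the two.
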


\begin{proof}
We first define $Y_{N-1}$ by constructing
\begin{align*}
M_n=\mathbb{E}\left[y+f_1(y)+g_1(y)\xi_{N}^H|\mathcal{F}_n\right].
\end{align*}
Through the Lipschitz's condition, $
\text {Hölder}
$'s inequality and the fact $y\in L(\mathcal{F}_N)$, we have
\begin{align}\label{2.4}
\mathbb{E}|M_n|^{2a\delta}&\le \mathbb{E}\left|y+f_1(y)+g_1(y)\xi_{N}^H\right|^{2a\delta}\notag\\
&\le C\left[\mathbb{E}|y|^{2a\delta}+\mathbb{E}|f(N,0,0)|^{2a\delta}\right]\notag\\
&\quad+C\left[\left(\mathbb{E}|y|^{2a}\right)^\delta+\left(\mathbb{E}|g(N,0,0)|^{2a}\right)^\delta\right]\left(\mathbb{E}|\xi_{N}|^{2a\delta\frac{1}{1-\delta}}\right)^{1-\delta}\notag\\
&<+\infty,
\end{align}
for $\delta\in(\frac{1}{a},1)$. Then it is a straightforward result that $\mathbb{E}|M_n|^2\le \left(\mathbb{E}|M_n|^{2a\delta}\right)^{\frac{1}{a\delta}}<+\infty$, so that $M_n$ is a square integrable martingale. Similar to formula (2.5) of \cite{}, there exists a unique adapted process $Z$, such that
\begin{align*}
M_n=M_0+\sum_{k=0}^{n-1}Z_k\eta_k,\quad  0\le n\le N.
\end{align*}
Rewrite the above equality as 
\begin{align*}
M_{N-1}+Z_{N-1}\eta_{N-1}=y+f_1(y)+g_1(y)\xi_{N}^H,
\end{align*}
which shows $Y_{N-1}=M_{N-1}$. Multiply $\eta_{N-1}$ on both side and take $\mathbb{E}[\cdot|\mathcal{F}_{N-1}]$, we obtain $Z_{N-1}$ by
\begin{align*}
Z_{N-1}=\mathbb{E}\left[\eta_{N-1}[y+f_1(y)+g_1(y)\xi_{N}^H]|\mathcal{F}_{N-1}\right].
\end{align*}
Similar to formula (\ref{2.4}), we have $\mathbb{E}|Z_{N-1}|^{2a\delta}<+\infty$. Up to now, we obtain the unique pair $(Y_{N-1},Z_{N-1})\in L^{2a\delta}(\mathcal{F}_{T-1})\times L^{2a\delta}(\mathcal{F}_{T-1})$. 

Then by induction, for all $0\le n\le N-2$, if  $(Y_{n+1},Z_{n+1})\in L^{2b}(\mathcal{F}_{n+1})\times L^{2b}(\mathcal{F}_{n+1})$ for some $b>1$. Define
\begin{align*}
\tilde{M}_k=\mathbb{E}\left[Y_{n+1}+f(n+1,Y_{n+1},Z_{n+1})+g(n+1,Y_{n+1},Z_{n+1})\xi_{n+1}^H|\mathcal{F}_k\right]
\end{align*}
for $0\le k\le n+1$. Then
we show $\tilde{M}_k\in L^{2b\delta}(\mathcal{F}_k)$:
\begin{align*}
\mathbb{E}|\tilde{M}_k|^{2b\delta}&=\mathbb{E}\left|Y_{n+1}+f(n+1,Y_{n+1},Z_{n+1})+g(n+1,Y_{n+1},Z_{n+1})\xi_{n+1}^H\right|^{2b\delta}\notag\\
&\le C\left[\mathbb{E}|Y_{n+1}|^{2b\delta}+\mathbb{E}|Z_{n+1}|^{2b\delta}+\mathbb{E}|f(n+1,0,0)|^{2b\delta}\right]\notag\\
&\quad+C\left[\mathbb{E}|Y_{n+1}|^{2b\delta}+\mathbb{E}|Z_{n+1}|^{2b\delta}+\mathbb{E}|g(n+1,0,0)|^{2b\delta}\right]\left(\mathbb{E}|\xi_{N-1}|^{2b\delta\frac{1}{1-\delta}}\right)^{1-\delta}\notag\\
&<+\infty.
\end{align*}
Let $Y_n=\tilde{M}_n$ and
\begin{align*}
Z_n=\mathbb{E}\left[\eta_n\left[Y_{n+1}+f(n+1,Y_{n+1},Z{n+1})+g(n+1,Y_{n+1},Z_{n+1})\xi_{n+1}^H\right]|\mathcal{F}_n\right],
\end{align*}
which implies
\begin{align*}
Y_n+Z_n\eta_n=Y_{n+1}+f(t,Y_{n+1},Z_{n+1})+g(t,Y_{n+1},Z_{n+1})\xi_{n+1}^H.
\end{align*}
Choosing $\delta\in[a^{-1/N},1)$, we obtain the unique process pair $(Y,Z)\in L^2_{\mathcal{F}}(0,N)\times L^2_{\mathcal{F}}(0,N-1)$.
\end{proof}

\section{A maximum principle}
In this section, we study the optimal control problem for discrete-time systems driven by fractional noises. Let $(\Omega,\mathcal{F},\{\mathcal{F}_n\}_{n\in\mathbb
{Z}^+},P)$ be a filtered probability space, $\mathcal{F}_0\subset \mathcal{F}$ be a sub $\sigma$-algebra and $\mathbb{F}=(\mathcal{F}_n)_{0\le n\le N}$ be the filtration defined by $\mathcal{F}_n=\mathcal{F}_0\lor\sigma(\xi^H_0,\xi_1^H,...,\xi_{n-1}^H)$. The state equation is
\begin{align}\label{state}
\left\{\begin{array}{ll}
X_{n+1}=X_n+b(n,X_n,u_n)+\sigma(n,X_n,u_n)\xi_n^H,\, 0\le n\le N-1,
\\X_0=x,
\end{array}\right.
\end{align}
with the cost function
\begin{align}\label{cost}
J(u)=\mathbb{E}\left[\sum_{n=0}^{N-1}l(n,X_n,u_n)+\Phi(X_N)\right].
\end{align}
Here $x\in L^{2a}(\mathcal{F}_0)$ is independent with $\{\xi_n\}$, 
$b(n,x,u)$ and $\sigma(n,x,u) $ are measurable functions on $[0,N-1]\times \mathbf{R}^d\times \mathbf{R}^k$ with values in $\mathbf{R}^d$ and $\mathbf{R}^{d\times m}$, respectively.
$l(n,x,u)$ and $\Phi(x) $ be measurable functions on $[0,N-1]\times \mathbf{R}^d\times \mathbf{R}^k$ and $\mathbf{R}^d$, respectively, with values in $\mathbf{R}$.

Denote by $\mathbb{U}$ the set of progressively measurable process
\textbf{u}$=(u_n)_{0\le n\le N-1}$ taking values in a given closed-convex set $\textbf{U}\subset \mathbb{R}^k$ and satisfying $\mathbb{E}\sum_{n=0}^{N-1}|u_n|^{2a} <+\infty$. The problem is to find an optimal control $u^*\in\mathbb{U}$ to minimized the cost function, i.e.,
\begin{align*}
J(u^*)=\inf_{u\in\mathbb{U}}J(u).
\end{align*}

To simplify the notation without losing the generality, we assume that $d=k=m=1$. We give the following assumptions:
\begin{enumerate}
	\item[(H3.1)]  $b$ and $\sigma$ are adapted processes: $b(\cdot,x,u), \sigma(\cdot,y,z)\in L^{2a}_{\mathcal{F}}(0,N-1)$ for all $x,u\in\mathbb{R}$.

\item[(H3.2)] $b(n,x,u),\sigma(n,x,u)$ are differential w.r.t $(x,u)$ and there exists some constants $L>0$, such that 
\begin{align*}
|
\phi(n,x_1,u_1)-\
&\phi(n,x_2,u_2)|\le L\left(|x_1-x_2|+|u_1-u_2|\right),\\
&\forall n\in[0,N-1],\quad\forall x_1,x_2,u_1,u_2\in\mathbb{R},
\end{align*}
for $\phi=b,\sigma,b_x,\sigma_x,b_u,\sigma_u.$
\item[(H3.3)] $l(n,x,u),\Phi(x)$ are differential w.r.t $(x,u)$ and 
\begin{align*}
|l_x(n,x,u)|+|\Phi_x(x)|\le L(1+|x|+|u|),\quad \forall n\in[0,N-1], \, x,u\in\mathbb{R}.
\end{align*}
\item[(H3.4)] $b(N,x,u)=\sigma(N,x,u)=l(N,x,u)=0,$ for all $(x,u)$.
\end{enumerate}
Then we show the solvability of the state equation.
\begin{lemma}\label{lem2}
Assume that assumptions (H3.1)-(H3.4) hold and $u\in\mathbb{U}$, then S$\Delta$E (\ref{state}) has a unique solution $X\in L_{\mathcal{F}}^2(0,N)$.
\end{lemma}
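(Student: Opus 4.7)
The plan is to treat the state equation as a forward recursion: existence and uniqueness at the level of random variables is immediate since, given $X_n$ which is $\mathcal{F}_n$-measurable by induction from $X_0 = x \in L^{2a}(\mathcal{F}_0)$, the value $X_{n+1}$ is uniquely defined by the right-hand side of (\ref{state}) and is $\mathcal{F}_{n+1}$-measurable. The only real content of the lemma is the integrability claim $X\in L^2_{\mathcal{F}}(0,N)$, i.e.\ $\mathbb{E}|X_n|^2<\infty$ for every $n$.

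The obstruction flagged in the introduction is that, unlike the white-noise setting, $\xi_n^H$ is not independent of $\mathcal{F}_n$, so $\mathbb{E}|\sigma(n,X_n,u_n)\xi_n^H|^{2b}$ cannot be factored. I would therefore copy the Hölder decoupling already used in Theorem~1: for $\delta\in(0,1)$,
\begin{align*}
\mathbb{E}\bigl|\sigma(n,X_n,u_n)\xi_n^H\bigr|^{2b}
\le \bigl(\mathbb{E}|\sigma(n,X_n,u_n)|^{2b/\delta}\bigr)^{\delta}\bigl(\mathbb{E}|\xi_n^H|^{2b/(1-\delta)}\bigr)^{1-\delta}.
\end{align*}
The Gaussian factor is finite for every exponent. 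The Lipschitz assumption (H3.2) applied at $(0,0)$ together with (H3.1) gives $|\sigma(n,X_n,u_n)|\le |\sigma(n,0,0)|+L(|X_n|+|u_n|)$, so the first factor is controlled by $\mathbb{E}|X_n|^{2b/\delta}$ plus finite terms depending on $u_n\in L^{2a}$ and $\sigma(n,0,0)\in L^{2a}$. The drift term $b(n,X_n,u_n)$ is handled by the same (easier) Lipschitz bound without the Gaussian factor.

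Plugging these estimates into $X_{n+1}=X_n+b(n,X_n,u_n)+\sigma(n,X_n,u_n)\xi_n^H$ yields the induction step: if $X_n\in L^{2a\delta^n}(\mathcal{F}_n)$, then with $2b/\delta=2a\delta^n$, i.e.\ $2b=2a\delta^{n+1}$, one gets $X_{n+1}\in L^{2a\delta^{n+1}}(\mathcal{F}_{n+1})$. To land at $X_N\in L^2$ I fix $\delta\in[a^{-1/N},1)$ once and for all, which ensures $2a\delta^N\ge 2$; summing $\mathbb{E}|X_n|^2$ over $n=0,\dots,N$ then delivers the $L^2_\mathcal{F}(0,N)$ bound. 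The main technical step is the Hölder decoupling above; everything else is routine bookkeeping of the shrinking integrability exponent, exactly parallel to the construction in the proof of Theorem~1.
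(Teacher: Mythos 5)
Your proposal is correct and matches the paper's proof in its essential mechanism: the same H\"older decoupling $\mathbb{E}|\sigma\,\xi_n^H|^{2b}\le(\mathbb{E}|\sigma|^{2b/\delta})^{\delta}(\mathbb{E}|\xi_n^H|^{2b/(1-\delta)})^{1-\delta}$ combined with the shrinking exponent scheme $2a\delta^n$ and the choice $\delta\in[a^{-1/N},1)$. The only divergence is uniqueness, which you dispatch (legitimately, and more simply) by noting that the forward recursion determines $X_{n+1}$ pathwise from $X_n$, whereas the paper runs a second inductive moment estimate on $\mathbb{E}|\tilde X_n-X_n|^{2a\delta^n}$; both are valid.
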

\begin{proof}
Since $x\in L^{2a}(\mathcal{F}_0)$, by assumptions (3.1) and (3.2),  we have
\begin{align*}
\mathbb{E}|X_1|^{2a\delta}&\le C\mathbb{E}\left[|x|^{2a\delta}+|f(0,0,0)|^{2a\delta}+|g(0,0,0)|^{2a\delta}|\xi_0^H|^{2a\delta}+|x|^{2a\delta}|\xi_0^H|^{2a\delta}\right]\\
&\le C\left[\left(\mathbb{E}|x|^{2a}\right)^\delta+\left(\mathbb{E}|f(0,0,0)|^{2a}\right)^\delta\right] \\
&\quad +C\left[\left(\mathbb{E}|x|^{2a}\right)^\delta+\left(\mathbb{E}|g(0,0,0)|^{2a}\right)^\delta\right]\times\left(\mathbb{E}|\xi_0^H|^{\frac{2a\delta}{1-\delta}}\right)^{1-\delta}\\
&<+\infty.
\end{align*}
for some $C>0$ and $\delta\in[a^{-1/N},1)$.

Then, by induction, if $X_n\in L^{2a\delta^n}(\mathcal{F}_n)$, we show $X_n\in L^{2a\delta^{n+1}}(\mathcal{F}_{n+1})\subset L^{2a}(\mathcal{F}_{n+1}), n=0,1,...,N-1$:
\begin{align*}
\mathbb{E}|X_{n+1}|^{2a\delta^{n+1}}&\le C\mathbb{E}\left[|X_n|^{2a\delta^{n+1}}+|f(n,0,0)|^{2a\delta^{n+1}}+|g(n,0,0)|^{2a\delta^{n+1}}|\xi_0^H|^{2a\delta^{n+1}}\right]\\
&\quad+C\mathbb{E}\left[|X_n|^{2a\delta^{n+1}}|\xi_0^H|^{2a\delta^{n+1}}\right]\\
&\le C\mathbb{E}\left[|X_n|^{2a\delta^{n+1}}+|f(0,0,0)|^{2a\delta^{n+1}}\right] \\
&\quad +C\left[\left(\mathbb{E}|x|^{2a\delta^n}\right)^\delta+\left(\mathbb{E}|g(0,0,0)|^{2a\delta^n}\right)^\delta\right]\times\left(\mathbb{E}|\xi_0^H|^{\frac{2a\delta^{n+1}}{1-\delta}}\right)^{1-\delta}\\
&<+\infty.
\end{align*}
Thus, the solution of (\ref{state}) in $ L^2_{\mathcal{F}}(0,N)$.

Uniqueness. Let $\tilde{X}$ and $X$ be two solutions of S$\Delta$E (\ref{state}), so that $\mathbb{E}|\tilde{X}_0-X_0|^{2a}=0$. If $\mathbb{E}|\tilde{X}_n-X_n|^{2a\delta^n}=0$, we have
\begin{align*}
\mathbb{E}|\tilde{X}_{n+1}-X_{n+1}|^{2a\delta^{n+1}}\le& C\mathbb{E}\left[|\tilde{X}_n-X_n|^{2a\delta^{n+1}}+|\tilde{X}_n-X_n|^{2a\delta^{n+1}}|\xi_n^H|^{2a\delta^{n+1}}\right]\\
\le& C\mathbb{E}|\tilde{X}_n-X_n|^{2a\delta^{n+1}}+C\left(\mathbb{E}|\tilde{X}_n-X_n|^{2a\delta^n}\right)^\delta\times\left(\mathbb{E}|\xi_n^H|^{\frac{2a\delta^{n+1}}{1-\delta}}\right)^{1-\delta}\\
=& 0,
\end{align*}
for $n=0,1,...,N-1$. Then we conclude $\sum_{n=0}^N\mathbb{E}|\tilde{X}_n-X_n|^2=0$, which shows the solution is unique.
\end{proof}
\begin{remark}\label{rem2}
Indeed, we could obtain the unique solution of 
S$\Delta$E (\ref{state}) in $L^{\beta}_{\mathcal{F}}(0,N)$ for $\beta\in(2,2a)$ as long as we take $\delta\in\left(\sqrt[N]{\frac{\beta}{2a}},1\right).$
\end{remark}
For any $u\in\mathbb{U}$ and $\varepsilon\in(0,1)$, let
\begin{align*}
u_n^\varepsilon=(1-\varepsilon)u^*_n+\varepsilon u_n:=u^*_n+\varepsilon v_n.
\end{align*}
Denote
\begin{align*}
\phi^*(n)=\phi(n,X^*_n,u^*_n),\\
\phi^\varepsilon(n)=\phi(n,X^\varepsilon_n,u^\varepsilon_n)
\end{align*}
for $\phi=b,\sigma,l,b_x,\sigma_x,l_x,b_u,\sigma_u,l_u$.

Define the variation equation by 
\begin{align*}
\left\{\begin{array}{ll}
V_{n+1}=V_n+b_x^*(n)V_n+b_u^*v_n+\left[\sigma_x^*(n)V_n+\sigma_u^*(n)v_n\right]\xi_n^H,\, 0\le n\le N-1,
\\V_0=0.
\end{array}\right.
\end{align*}
Then we have the following convergence result.
\begin{lemma}\label{lem3}
Let $X^\varepsilon$, $X^*$ be the corresponding state equation to $u^\varepsilon$, $u^*$, respectively. Then we have 
\begin{align}\label{3.3}
\sum_{n=0}^N\mathbb{E}|X_n^\varepsilon-X^*_n|^2\le O(\varepsilon^2),
\end{align}
and
\begin{align}\label{3.4}
\lim_{\varepsilon\to 0}\sum_{n=0}^N\mathbb{E}\left|\frac{X^\varepsilon_n-X^*_n}{\varepsilon
}-V_n\right|^2=0.
\end{align}
\end{lemma}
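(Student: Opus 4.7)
The plan is to prove both parts by induction on $n$, recycling the exponent-degradation device $2a\delta^n$ used in Lemma \ref{lem2}. Its purpose is precisely that at each step the non-independent factor $|\xi_n^H|$ can be peeled off from an $\mathcal{F}_n$-measurable coefficient by Hölder's inequality with exponents $1/\delta$ and $1/(1-\delta)$, at the cost of shrinking the power on the state difference by a factor of $\delta$. Fixing $\delta\in[a^{-1/N},1)$ at the outset guarantees $2a\delta^N\ge 2$, so every intermediate $L^{2a\delta^n}$ bound passes to the target $L^2$ norm via Jensen.

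For the first estimate (\ref{3.3}), I would start from
\[
X_{n+1}^\varepsilon-X_{n+1}^*=(X_n^\varepsilon-X_n^*)+[b^\varepsilon(n)-b^*(n)]+[\sigma^\varepsilon(n)-\sigma^*(n)]\xi_n^H
\]
and use (H3.2) to dominate the increment by $L(|X_n^\varepsilon-X_n^*|+\varepsilon|v_n|)(1+|\xi_n^H|)$. Raising to the power $2a\delta^{n+1}$ and applying Hölder to isolate $|\xi_n^H|^{2a\delta^{n+1}/(1-\delta)}$, whose expectation is finite for Gaussian $\xi^H$, reduces matters to the inductive estimate $\mathbb{E}|X_n^\varepsilon-X_n^*|^{2a\delta^n}\le C_n\varepsilon^{2a\delta^n}$, which is trivial at $n=0$ because $X_0^\varepsilon=X_0^*=x$. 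Summing over $n$ and invoking Jensen at the final exponent yields (\ref{3.3}).

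For the second statement, set $Y_n^\varepsilon:=(X_n^\varepsilon-X_n^*)/\varepsilon-V_n$ and Taylor-expand
\[
\frac{b^\varepsilon(n)-b^*(n)}{\varepsilon}=\int_0^1 b_x(n,X_n^{*,\theta},u_n^{*,\theta})\,d\theta\cdot\frac{X_n^\varepsilon-X_n^*}{\varepsilon}+\int_0^1 b_u(n,X_n^{*,\theta},u_n^{*,\theta})\,d\theta\cdot v_n,
\]
where $X_n^{*,\theta}=X_n^*+\theta(X_n^\varepsilon-X_n^*)$ and $u_n^{*,\theta}=u_n^*+\theta\varepsilon v_n$, and analogously for $\sigma$. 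Subtracting the variation equation gives
\[
Y_{n+1}^\varepsilon=(1+b_x^*(n))Y_n^\varepsilon+\sigma_x^*(n)Y_n^\varepsilon\xi_n^H+R_n^\varepsilon+S_n^\varepsilon\xi_n^H,
\]
where $R_n^\varepsilon,S_n^\varepsilon$ gather the coefficient-difference terms of the form $[b_x(n,X_n^{*,\theta},u_n^{*,\theta})-b_x^*(n)]\cdot(X_n^\varepsilon-X_n^*)/\varepsilon$ and $[b_u(n,X_n^{*,\theta},u_n^{*,\theta})-b_u^*(n)]\cdot v_n$. Lipschitz continuity of $b_x,b_u,\sigma_x,\sigma_u$ in (H3.2) bounds these residuals pointwise by $L(|X_n^\varepsilon-X_n^*|+\varepsilon|v_n|)(|X_n^\varepsilon-X_n^*|/\varepsilon+|v_n|)$, which by estimate (\ref{3.3}) goes to $0$ in $L^{2a\delta^n}$ as $\varepsilon\to 0$. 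Starting from $Y_0^\varepsilon=0$ and iterating the same Hölder-induction as in part one then produces $\mathbb{E}|Y_n^\varepsilon|^{2a\delta^n}\to 0$ for every $n$, which via Jensen gives (\ref{3.4}).

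\textbf{Main difficulty.} The obstruction is the one flagged in the introduction: because $\xi_n^H$ is correlated with the $\mathcal{F}_n$-measurable factors $\sigma_x^*(n)Y_n^\varepsilon$ and $S_n^\varepsilon$, no Itô-type isometry $\mathbb{E}|Z\xi_n^H|^2=\mathbb{E}|Z|^2\mathbb{E}|\xi_n^H|^2$ is available, and in particular the telescoping trick that would work for white noise fails. The $\delta$-shrinking Hölder decoupling is what substitutes for the missing orthogonality; the care needed is in choosing $\delta$ once and for all at the start so that the eroded terminal exponent $2a\delta^N$ still dominates $2$, which is the step that forces the whole induction to close over the fixed horizon $N$.
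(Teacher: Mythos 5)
Your proposal is correct and follows essentially the same route as the paper: the same $2a\delta^{n}$ exponent-degradation induction with Hölder (exponents $1/\delta$ and $1/(1-\delta)$) to decouple $|\xi_n^H|$ from its $\mathcal{F}_n$-measurable coefficient, the same Taylor expansion with integral remainder $\tilde{\phi}^\varepsilon(n)=\int_0^1\phi(n,X_n^*+\theta(X_n^\varepsilon-X_n^*),u_n^*+\theta\varepsilon v_n)\,d\theta$, and the same recursion for $\hat{X}^\varepsilon=(X^\varepsilon-X^*)/\varepsilon-V$ closed by choosing $\delta\in[a^{-1/N},1)$ once at the start. Your version is, if anything, slightly more careful in explicitly tracking the $\varepsilon|v_n|$ contribution in the first estimate.
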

\begin{proof}
The proof of (\ref{3.3}) follows lemma \ref{lem2}, if $\mathbb{E}|X_n^\varepsilon-X_n^*|^{2a\delta^n}=O\left(\varepsilon^{2a\delta^n}\right)$, then 
\begin{align*}
\mathbb{E}|X^\varepsilon_{n+1}-X^*_{n+1}|^{2a\delta^{n+1}}\le& C\mathbb{E}\left[|X^\varepsilon_n-X^*_n|^{2a\delta^{n+1}}+|X^\varepsilon_n-X^*_n|^{2a\delta^{n+1}}|\xi_n^H|^{2a\delta^{n+1}}\right]\\
\le& C\left(\mathbb{E}|X^\varepsilon_n-X^*_n|^{2a\delta^n}\right)^\delta+C\left(\mathbb{E}|X^\varepsilon_n-X^*_n|^{2a\delta^n}\right)^\delta\times\left(\mathbb{E}|\xi_n^H|^{\frac{2a\delta^{n+1}}{1-\delta}}\right)^{1-\delta}\\
=& O\left(\varepsilon^{2a\delta^{n+1}}\right).
\end{align*}
So that
\begin{align*}
\sum_{n=0}^N\mathbb{E}|X^\varepsilon_n-X^*_n|^2\le \sum_{n=0}^N\left(\mathbb{E}|X^\varepsilon_n-X^*_n|^{2a\delta^n}\right)^{\frac{1}{a\delta^n}}=O(\varepsilon^2).
\end{align*}

Denote $\hat{X}^\varepsilon=\frac{X^\varepsilon-X^*}{\varepsilon}-V$, it follows
\begin{align*}
\hat{X}^\varepsilon_{n+1}=&\hat{X}^\varepsilon_{n}+\frac{b^\varepsilon(n)-b^*(n)}{\varepsilon}-b_x^*(n)V_n-b_u^*(n)v_n\\
&+\left[\frac{\sigma^\varepsilon(n)-\sigma^*(n)}{\varepsilon}-\sigma_x^*(n)V_n-\sigma_u^*(n)v_n\right]\xi_n^H\\
=&[1+b_x^*(n)]\hat{X^\varepsilon_n}+\frac{\tilde{b}_x^\varepsilon(n)-b^*_x(n)}{\varepsilon}[X^\varepsilon_n-X^*_n]+[\tilde{b}_u^\varepsilon(n)-b^*_u(n)]v_n\\
&+\left[\sigma_x^*(n)\hat{X^\varepsilon_n}+\frac{\tilde{\sigma}_x^\varepsilon(n)-\sigma^*_x(n)}{\varepsilon}[X^\varepsilon_n-X^*_n]+[\tilde{\sigma}_u^\varepsilon(n)-\sigma^*_u(n)]v_n\right]\xi^H_n,
\end{align*}
where
\begin{align*}
\tilde{\phi}^\varepsilon(n)=\int_0^1 \phi(n,X^*_n+\theta(X^\varepsilon_n-X^*_n),u^*_n+\theta\varepsilon v_n)d\theta,
\end{align*}
for $\phi=b_x,b_u,\sigma_x,\sigma_u$. Then
\begin{align*}
\mathbb{E}\left|\hat{X}^\varepsilon_{n+1}\right|^{2a\delta^{n+1}}\le& C\mathbb{E}\left|\hat{X}^\varepsilon_{n}\right|^{2a\delta^{n+1}}+C\mathbb{E}\|\tilde{b}_u^\varepsilon(n)-b^*_u(n)\|^{2a\delta^{n+1}}|v_n|^{2a\delta^{n+1}}\\
&+C\mathbb{E}\|\tilde{b}_x^\varepsilon(n)-b^*_x(n)\|^{2a\delta^{n+1}}\left|\varepsilon^{-1}(X^\varepsilon_n-X^*_n)\right|^{2a\delta^{n+1}}\\
&+C\mathbb{E}\Bigg[\left|\hat{X}^\varepsilon_{n}\right|^{2a\delta^{n+1}}+\|\tilde{\sigma}_u^\varepsilon(n)-\sigma^*_u(n)\|^{2a\delta^{n+1}}|v_n|^{2a\delta^{n+1}}\\
&\qquad+\|\tilde{\sigma}_x^\varepsilon(n)-\sigma^*_x(n)\|^{2a\delta^{n+1}}\left|\varepsilon^{-1}(X^\varepsilon_n-X^*_n)\right|^{2a\delta^{n+1}}\Bigg]\left|\xi_n^H\right|^{2a\delta^{n+1}}\\
\le&C\left[\left(\mathbb{E}\left|\hat{X}^\varepsilon_{n}\right|^{2a\delta^{n}}\right)^\delta+o(1)\right]\times\left[1+\left(\mathbb{E}|\xi_n^H|^{\frac{2a\delta^{n+1}}{1-\delta}}\right)^{1-\delta}\right].
\end{align*}
Since $\mathbb{E}|\hat{X}^\varepsilon_0|^{2a}=0$, by induction, we have 
\begin{align*}
\lim_{\varepsilon\to 0}\mathbb{E}\left|\hat{X}^\varepsilon_{n}\right|^{2a\delta^{n}}=0,\quad \forall n=0,1,...,N,
\end{align*}
which complete the proof of (\ref{3.4}).
\end{proof}
Then we give the stochastic maximum principle for control system (\ref{state}), (\ref{cost}).
\begin{theorem}
Let assumptions (3.1)-(3.3) hold, $u^*, X^*$ be the optimal control and the corresponding state process. Let $(p,q)$ be the solution to the following BS$\Delta$E:
\begin{align}\label{adjoint}
\left\{\begin{array}{ll}
p_n+q_n\eta_n=p_{n+1}+b_x^*(n+1)p_{n+1}+b(n+1,n+1)\sigma_x^*(n+1)q_{n+1}\\
\qquad\qquad\qquad+l_x^*(n+1)+\sigma_x^*(n+1)p_{n+1}\xi_{n+1}^H,
\\p_N=\Phi_x(X^*_N),\\
q_N=0.
\end{array}\right.
\end{align}
Then the following inequality holds:
\begin{align*}
\left[b_u^*(n)p_n+\sigma_u^*(n)p_n\sum_{k=0}^{n-1}c(n,k)\xi_k^H+b(n,n)\sigma_u^*(n)q_n+l_u^*(n)\right]\cdot(u_n-u^*_n)\ge 0,
\end{align*}
a.s., for all $n\in[0,N-1], u\in\mathbb{U}$. Here $c(n,k)=\sum_{l=0}^{n-1}b(n,l)a(l,k)$ and $a(\cdot,\cdot), b(\cdot,\cdot)$ are given by lemma \ref{lem1}.
\end{theorem}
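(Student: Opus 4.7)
The plan is to follow the convex-variation scheme, convert the first-order cost variation into a duality identity via the adjoint BS$\Delta$E, and then extract the pointwise inequality from the arbitrariness of $v=u-u^*$. Since $\mathbf{U}$ is convex, for any $u\in\mathbb{U}$ and $\varepsilon\in(0,1)$ the perturbation $u^\varepsilon_n=u^*_n+\varepsilon v_n$ with $v_n=u_n-u^*_n$ lies in $\mathbb{U}$, so optimality gives $J(u^\varepsilon)\ge J(u^*)$. Expanding $l(n,\cdot,\cdot)$ and $\Phi$ around $(X^*_n,u^*_n)$, using Lemma~\ref{lem3} to pass $\varepsilon^{-1}(X^\varepsilon-X^*)\to V$ in $L^2$ and the Lipschitz bounds in (H3.3) to obtain uniform integrability of the Taylor remainder, I would divide by $\varepsilon$ and send $\varepsilon\to 0^+$ to arrive at the first-order variational inequality
\begin{align*}
0\le \mathbb{E}\left[\sum_{n=0}^{N-1}\bigl(l_x^*(n)V_n+l_u^*(n)v_n\bigr)+\Phi_x(X^*_N)V_N\right].
\end{align*}

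Next I would rewrite $\mathbb{E}[\Phi_x(X^*_N)V_N]=\mathbb{E}[p_NV_N]$ by discrete integration by parts. Starting from $p_NV_N-p_0V_0=\sum_{n=0}^{N-1}[(p_{n+1}-p_n)V_{n+1}+p_n(V_{n+1}-V_n)]$ (with $V_0=0$), I would substitute the variation equation into $V_{n+1}-V_n$ and the rearranged adjoint BS$\Delta$E into $p_{n+1}-p_n$; an index shift $m=n+1$ in the adjoint-driven sum, together with (H3.4) and $q_N=0$ killing the boundary term at $m=N$, aligns both sums at time $n$. The $b_x^*(n)V_np_n$ and $\sigma_x^*(n)V_np_n\xi_n^H$ contributions then cancel exactly, leaving
\begin{align*}
\mathbb{E}[p_NV_N]&=\mathbb{E}\Bigl[\sum_{n=0}^{N-1}V_{n+1}q_n\eta_n-\sum_{n=0}^{N-1}V_n\bigl(b(n,n)\sigma_x^*(n)q_n+l_x^*(n)\bigr)\\
&\qquad+\sum_{n=0}^{N-1}p_n\bigl(b_u^*(n)v_n+\sigma_u^*(n)v_n\xi_n^H\bigr)\Bigr].
\end{align*}

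Third, I would evaluate the two mixed-noise expectations using Lemma~\ref{lem1}, namely $\xi_n^H=b(n,n)\eta_n+\sum_{k=0}^{n-1}b(n,k)\eta_k$, together with $\mathbb{E}[\eta_n|\mathcal{F}_n]=0$ and $\mathbb{E}[\eta_n^2|\mathcal{F}_n]=1$. Plugging the variation equation into $V_{n+1}$ and using $\mathcal{F}_n$-measurability of all factors other than $\eta_n$ gives $\mathbb{E}[V_{n+1}q_n\eta_n]=\mathbb{E}[(\sigma_x^*(n)V_n+\sigma_u^*(n)v_n)b(n,n)q_n]$, whose $V_n$-part cancels the $V_nb(n,n)\sigma_x^*(n)q_n$ term from the previous step. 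For the remaining $\xi^H$-term,
\begin{align*}
\mathbb{E}[p_n\sigma_u^*(n)v_n\xi_n^H]=\mathbb{E}\Bigl[p_n\sigma_u^*(n)v_n\sum_{k=0}^{n-1}b(n,k)\eta_k\Bigr]=\mathbb{E}\Bigl[p_n\sigma_u^*(n)v_n\sum_{k=0}^{n-1}c(n,k)\xi_k^H\Bigr],
\end{align*}
where reconverting $\eta_k=\sum_{l=0}^k a(k,l)\xi_l^H$ produces the kernel $c(n,k)=\sum_l b(n,l)a(l,k)$. Combined with the first step, this yields
\begin{align*}
0\le \mathbb{E}\left[\sum_{n=0}^{N-1}v_n\Bigl(l_u^*(n)+b_u^*(n)p_n+\sigma_u^*(n)p_n\sum_{k=0}^{n-1}c(n,k)\xi_k^H+b(n,n)\sigma_u^*(n)q_n\Bigr)\right].
\end{align*}

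To conclude the pointwise statement I would localize: for a fixed $m\in[0,N-1]$ and an arbitrary $\mathcal{F}_m$-measurable set $A$, take $u_n=u^*_n$ for $n\neq m$ and $u_m=u^*_m+(u-u^*_m)\mathbf{1}_A$ with $u\in\mathbf{U}$ arbitrary; only the index $m$ then contributes to the expectation, and the arbitrariness of $A$ and $u$ delivers the claimed a.s.\ inequality. The main obstacle is the third step: the cross expectations $\mathbb{E}[V_{n+1}q_n\eta_n]$ and $\mathbb{E}[p_n\sigma_u^*(n)v_n\xi_n^H]$ admit no It\^o-isometry shortcut and must be evaluated by hand using the explicit triangular $\xi$--$\eta$ decomposition from Lemma~\ref{lem1}; this computation is precisely what explains the otherwise asymmetric coefficients $b(n+1,n+1)\sigma_x^*(n+1)q_{n+1}$ and $\sigma_x^*(n+1)p_{n+1}\xi_{n+1}^H$ in the adjoint BS$\Delta$E, which are tailored so that the $V_n$ terms telescope away cleanly.
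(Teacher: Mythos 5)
Your proposal is correct and follows essentially the same route as the paper: convex perturbation and Lemma \ref{lem3} to obtain the first-order variational inequality, discrete summation by parts of $p_nV_n$ against the adjoint BS$\Delta$E with the boundary terms killed by (H3.4), $V_0=0$ and $q_N=0$, evaluation of the mixed expectations $\mathbb{E}[V_{n+1}q_n\eta_n]$ and $\mathbb{E}[p_n\sigma_u^*(n)v_n\xi_n^H]$ via the triangular decomposition $\xi_n^H=\sum_{k=0}^{n}b(n,k)\eta_k$ from Lemma \ref{lem1} (which is exactly the paper's identity (3.8) and its analogue producing the kernel $c(n,k)$), and finally localization with indicator functions. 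The only cosmetic difference is that the paper groups the cancelling terms inside the one-step identity $\Delta(p_nV_n)$ before summing, whereas you telescope first and shift indices afterward; the computations are identical.
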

\begin{proof}
According to the assumptions (H3.1)-(H3.2) and remark \ref{rem2}, it is easy to check BS$\Delta$E (\ref{adjoint}) satisfies  (H2.1)-(H2.3), so that BS$\Delta$E (\ref{adjoint}) has a unique solution.

Trough lemma \ref{lem3}, it is easy to show the directional derivative of $J$ is 
\begin{align}\label{3.6}
\frac{d}{d\varepsilon}J(u^*+\varepsilon v)\Big|_{\varepsilon=0}=\mathbb{E}\left[\sum_{n=0}^{N-1}\left[l_x^*(n)V_n+l_u^*(n)v_n\right]+\Phi_x(X^*_N)V_N\right].
\end{align}
Consider 
\begin{align}\label{3.7}
\Delta(p_nV_n)=&p_{n+1}V_{n+1}-p_nV_n\notag\\
=&-V_{n+1}\big[b_x^*(n+1)p_{n+1}+b(n+1,n+1)\sigma_x^*(n+1)q_{n+1}\notag\\
&\qquad\qquad+l_x^*(n+1)+\sigma_x^*(n+1)p_{n+1}\xi_{n+1}^H\big]\notag\\
&+V_{n+1}q_n\eta_n+p_n\left[b_x^*(n)V_n+b_u^*(n)v_n\right]\notag\\
&+p_n\left[\sigma_x^*(n)V_n+\sigma_u^*(n)v_n\right]\xi_n^H\notag\\
=&-\left[b_x^*(n+1)p_{n+1}V_{n+1}-b_x^*(n)p_nV_n\right]\notag\\
&-\left[\sigma_x^*(n+1)p_{n+1}V_{n+1}\xi_{n+1}^H-\sigma_x^*(n)p_nV_n\xi_n^H\right]\notag\\
&-\left[b(n+1,n+1)\sigma_x^*(n+1)q_{n+1}V_{n+1}-\sigma_x^*(n)q_nV_n\eta_n\xi_n^H\right]\notag\\
&+\left[V_n+b^*(n)V_n\right]q_n\eta_n+\sigma_u^*(n)q_nv_n\eta_n\xi_n^H\notag\\
&-l_x^*(n+1)V_{n+1}+b_u^*(n)p_nv_n+\sigma_u^*(n)p_nv_n\xi_{n}^H.
\end{align}
Notice that 
\begin{align*}
\mathbb{E}\left(\left[V_n+b^*(n)V_n\right]q_n\eta_n\right)=\mathbb{E}\left(\left[V_n+b^*(n)V_n\right]q_n\mathbb{E}\left[\eta_n|\mathcal{F}_n\right]\right)=0,
\end{align*}
and
\begin{align}\label{3.8}
\mathbb{E}\left[\sigma_x^*(n)q_nV_n\eta_n\xi_n^H\right]=&\mathbb{E}\left[\sigma_x^*(n)q_nV_n\mathbb{E}\left(\eta_n\xi_n^H|\mathcal{F}_n\right)\right]\notag\\
=&\mathbb{E}\left[\sigma_x^*(n)q_nV_n\mathbb{E}\left(\eta_n\sum_{k=0}^nb(n,k)\eta_k|\mathcal{F}_n\right)\right]\notag\\
=&\mathbb{E}\left[\sigma_x^*(n)q_nV_n\sum_{k=0}^{n-1}b(n,k)\eta_k\mathbb{E}\left(\eta_n|\mathcal{F}_n\right)\right]\notag\\
&+\mathbb{E}\left[\sigma_x^*(n)q_nV_n\mathbb{E}\left(b(n,n)\eta_n^2|\mathcal{F}_n\right)\right]\notag\\
=&\mathbb{E}\left[b(n,n)\sigma_x^*(n)q_nV_n\right],
\end{align}
so take summation and expectation of equation (\ref{3.7}), we have
\begin{align}\label{3.9}
\mathbb{E}\left[g_x(X_N^*)V_N\right]=&\sum_{n=0}^{N-1}\Delta(p_nV_n)\notag\\
=&-\mathbb{E}[b_x^*(N)p_NV_N-b_x^*(0)p_0V_0]\notag\\
&-\mathbb{E}[\sigma_x^*(N)p_NV_N\xi_N^H-\sigma_x^*(0)p_0V_0\xi_0^H]\notag\\
&-\mathbb{E}[b(N,N)\sigma_x^*(N)q_NV_N-\sigma_x^*(0)q_0V_0\xi_0^H]\notag\\
&-\mathbb{E}\sum_{n=1}^Nl_x^*(n)V_{n}+\mathbb{E}\sum_{n=0}^{N-1}b_u^*(n)p_nv_n\notag\notag\\
&+\mathbb{E}\sum_{n=0}^{N-1}[\sigma_u^*(n)p_nv_n\xi_{n}^H]+\mathbb{E}\sum_{n=0}^{N-1}[\sigma_u^*(n)q_nv_n\eta_n\xi_n^H]\notag\\
=&-\mathbb{E}\sum_{n=0}^{N-1}l_x^*(n)V_{n}+\mathbb{E}\sum_{n=0}^{N-1}b_u^*(n)p_nv_n\notag\\
&+\mathbb{E}\sum_{n=0}^{N-1}[\sigma_u^*(n)p_nv_n\xi_{n}^H]+\mathbb{E}\sum_{n=0}^{N-1}[\sigma_u^*(n)q_nv_n\eta_n\xi_n^H],
\end{align}
since $b_x^*(N)=\sigma_x^*(N)=l_x^*(N)=V_0=0$.
\end{proof}
Substitute equation (\ref{3.9}) to (\ref{3.6}), it follows
\begin{align}\label{3.10}
&\frac{d}{d\varepsilon}J(u^*+\varepsilon v)\Big|_{\varepsilon=0}\notag\\
&=\mathbb{E}\sum_{n=0}^{N-1}\left[b_u^*(n)p_n+\sigma_u^*(n)p_n\xi_n^H+\sigma_u^*(n)q_n\eta_n\xi_n^H+l_u^*(n)\right]\cdot v_n
\end{align}
Notice that, similar to equation (\ref{3.8}),
\begin{align*}
\mathbb{E}\left[\sigma_u^*(n)q_n\eta_n\xi_n^H\right]=\mathbb{E}\left[b(n,n)\sigma_u^*(n)q_n\right],
\end{align*}
and
\begin{align*}
\mathbb{E}\left[\sigma_u^*(n)p_n\xi_n^H\right]=&\mathbb{E}\left[\sigma_u^*(n)p_n\mathbb{E}\left(\sum_{l=0}^nb(n,l)\eta_l^H|\mathcal{F}_n\right)\right]\\
=&\mathbb{E}\left[\sigma_u^*(n)p_n\mathbb{E}\left(\sum_{l=0}^{n-1}\sum_{k=0}^lb(n,l)a(l,k)\xi_k^H|\mathcal{F}_n\right)\right]\\
&+\mathbb{E}\left[\sigma_u^*(n)p_nb(n,n)\mathbb{E}\left(\eta_n|\mathcal{F}_n\right)\right]\\
:=&\mathbb{E}\left[\sigma_u^*(n)p_n\sum_{k=0}^{n-1}c(n,k)\xi_k^H\right],
\end{align*}
through equation (\ref{3.10}) and the fact
$\frac{d}{d\varepsilon}J(u^*+\varepsilon v)\Big|_{\varepsilon=0}\ge 0$, we conclude
\begin{align*}
\mathbb{E}\sum_{n=0}^{N-1}\left[b_u^*(n)p_n+\sigma_u^*(n)p_n\sum_{k=0}^{n-1}c(n,k)\xi_k^H+b(n,n)\sigma_u^*(n)q_n+l_u^*(n)\right]\cdot v_n\ge 0.
\end{align*}
By the arbitrary of $v$, we have 
\begin{align*}
\mathbb{E}\left\{\mathbf{1}_\mathcal{A}\left[b_u^*(n)p_n+\sigma_u^*(n)p_n\sum_{k=0}^{n-1}c(n,k)\xi_k^H+b(n,n)\sigma_u^*(n)q_n+l_u^*(n)\right]\cdot v_n\right\}\ge 0,
\end{align*}
for all $n\in[0,N-1],\,\mathcal{A}\in\mathcal{F}_n$, which implies 
\begin{align*}
\left[b_u^*(n)p_n+\sigma_u^*(n)p_n\sum_{k=0}^{n-1}c(n,k)\xi_k^H+b(n,n)\sigma_u^*(n)q_n+l_u^*(n)\right]\cdot(u_n-u^*_n)\ge 0.
\end{align*}

\begin{remark}
If the optimal control process $(u_n^*)_{0\le n\le N-1}$ takes values in the interior of the $\mathbb{U}$, it implies
\begin{align*}
b_u^*(n)p_n+\sigma_u^*(n)p_n\sum_{k=0}^{n-1}c(n,k)\xi_k^H+b(n,n)\sigma_u^*(n)q_n+l_u^*(n)=0,\quad a.s.
\end{align*}
for all $n\in[0,N-1]$. Thus, we obtain the optimal system:
 \begin{align}\label{sys}
    \left\{\begin{array}{ll}
X_{n+1}=X_n+b(n,X_n,u_n)+\sigma(n,X_n,u_n)\xi_n^H,\\\\
p_n+q_n\eta_n=p_{n+1}+b_x^*(n+1)p_{n+1}+b(n+1,n+1)\sigma_x^*(n+1)q_{n+1}\\\\
\qquad\qquad\qquad+l_x^*(n+1)+\sigma_x^*(n+1)p_{n+1}\xi_{n+1}^H,\\\\
X_0=x,
\\\\p_N=\Phi_x(X^*_N),\\\\
q_N=0,\\\\
b_x^*(N)=\sigma_x^*(N)=l_x^*(N)=0,\\\\
b_u^*(n)p_n+\sigma_u^*(n)p_n\sum_{k=0}^{n-1}c(n,k)\xi_k^H+b(n,n)\sigma_u^*(n)q_n+l_u^*(n)=0.
\end{array}\right.
\end{align}

\begin{corollary}
If $\xi^H$ is white noise ($H=1/2$), then it is easy to check
\begin{equation}
a(n,k)=b(n,k)=\left\{
\begin{aligned}
&1,\ \ {\rm if}\ n=k,
\\
&0,\ \ {\rm if}\ n\neq k,
\end{aligned}
\right.
\notag
\end{equation}
so that $c(n,k)=0, \,k=0,1,...,n-1$. Moreover, consider the adjoint BS$\Delta$E, $p$ and $q$ are given by
\begin{align*}
p_n=\mathbb{E}\Big[p_{n+1}+&b_x^*(n+1)p_{n+1}+b(n+1,n+1)\sigma_x^*(n+1)q_{n+1}\\
&+l_x^*(n+1)+\sigma_x^*(n+1)p_{n+1}\xi_{n+1}^H|\mathcal{F}_n\Big]\\
=\mathbb{E}\Big[p_{n+1}+&b_x^*(n+1)p_{n+1}+\sigma_x^*(n+1)q_{n+1}+l_x^*(n+1)|\mathcal{F}_n\Big],
\end{align*}
and
\begin{align*}
q_n=\mathbb{E}\Big[\eta_n\big[p_{n+1}+&b_x^*(n+1)p_{n+1}+\sigma_x^*(n+1)q_{n+1}+l_x^*(n+1)\big]|\mathcal{F}_n\Big].
\end{align*}
We can also write the adjoint equation $(p,q)$ as 
\begin{align*}
\left\{\begin{array}{ll}
p_n+q_n\eta_n=p_{n+1}+b_x^*(n+1)p_{n+1}+\sigma_x^*(n+1)q_{n+1}+l_x^*(n+1),
\\p_N=\Phi_x(X^*_N),\\
q_N=0.
\end{array}\right.
\end{align*}
The condition for optimal control is 
\begin{align*}
b_u^*(n)p_n+\sigma_u^*(n)q_n+l_u^*(n)=0, \quad a.s.\quad n=0,1,...,N-1,
\end{align*}
which is same to the results obtained in \cite{lin2014maximum}
\end{corollary}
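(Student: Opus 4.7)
The plan is to verify the specialization step by step, starting from the covariance structure at $H=1/2$, propagating to the matrices $A,B$ from Lemma \ref{lem1}, and finally substituting into the adjoint equation and the optimality condition. When $H=1/2$ the fractional noise $\xi^H_n=B(n+1)-B(n)$ is a sequence of i.i.d.\ standard Gaussians, hence $\rho(i,j)=\delta_{ij}$. The goal of the first part is to show that, under this covariance, the recursion (\ref{bij}) forces $B=I_{N\times N}$, and therefore $A=B^{-1}=I_{N\times N}$.

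First I would establish $b_{ij}=\delta_{ij}$ by a double induction: an outer induction on $i$, and, for each $i$, an inner induction on $j\le i$. For $j=1$ the recursion gives $b_{i,1}=\rho(i-1,0)/b_{11}=\delta_{i,1}$; for $1<j<i$ the numerator is $\rho(i-1,j-1)-\sum_{k=0}^{j-2}b_{i,k+1}b_{j,k+1}$, which vanishes because $\rho$ is diagonal and the inner induction has already set $b_{i,k+1}=0$ for $k+1<j$; finally for $j=i$ the recursion collapses to $b_{ii}^2=1$, and the lower-triangular Cholesky convention forces $b_{ii}=1$. This gives $b(n,k)=\delta_{nk}$ immediately, and because $A=B^{-1}$ is again the identity we also get $a(n,k)=\delta_{nk}$, so in particular $\eta_n=\xi_n^H$. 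Plugging into the definition $c(n,k)=\sum_{l=0}^{n-1}b(n,l)a(l,k)$ the only possibly surviving index would be $l=n$, which lies outside the summation range, so $c(n,k)=0$ for every $k\in[0,n-1]$.

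Once these identities are in hand, the specialization of the adjoint equation and optimality condition is a direct substitution. In the adjoint BS$\Delta$E the coefficient $b(n+1,n+1)=1$ absorbs cleanly, so the cross term $b(n+1,n+1)\sigma_x^*(n+1)q_{n+1}$ collapses to $\sigma_x^*(n+1)q_{n+1}$. The scalar recursions for $p_n$ and $q_n$ then follow from taking $\mathbb{E}[\cdot|\mathcal{F}_n]$ and $\mathbb{E}[\eta_n\,\cdot\,|\mathcal{F}_n]$ on both sides and observing that, when $H=1/2$, $\xi_{n+1}^H$ is independent of $\mathcal{F}_{n+1}$ with mean zero, so the stochastic term $\sigma_x^*(n+1)p_{n+1}\xi_{n+1}^H$ contributes nothing to either conditional expectation; this is where the fractional framework genuinely reduces to the white-noise one. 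The maximum principle inequality similarly loses the $\sum_{k=0}^{n-1}c(n,k)\xi_k^H$ term and the factor $b(n,n)$ becomes $1$, giving $b_u^*(n)p_n+\sigma_u^*(n)q_n+l_u^*(n)=0$ a.s., which matches Lin--Zhang \cite{lin2014maximum}.

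The only nontrivial piece is the induction establishing $B=I$; everything else is substitution. The main subtlety to watch is the base case $b_{11}$ (where the recursion is formally circular and one must invoke the Cholesky convention) and the careful bookkeeping of the conditional expectations with respect to the nested filtrations $\mathcal{F}_n\subset\mathcal{F}_{n+1}\subset\mathcal{F}_{n+2}$, since $p_{n+1},q_{n+1}\in\mathcal{F}_{n+1}$ while $\xi_{n+1}^H\in\mathcal{F}_{n+2}$ and is independent of $\mathcal{F}_{n+1}$ precisely because $H=1/2$.
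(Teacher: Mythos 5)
Your proposal is correct and follows the same (essentially unique) route the paper implicitly relies on: specialize $\rho(i,j)=\delta_{ij}$ at $H=1/2$, run the Cholesky recursion to get $B=A=I$ hence $\eta_n=\xi_n^H$ and $c(n,k)=0$, and then read off $p_n$ and $q_n$ by applying $\mathbb{E}[\,\cdot\,|\mathcal{F}_n]$ and $\mathbb{E}[\eta_n\,\cdot\,|\mathcal{F}_n]$, using that $\xi_{n+1}^H$ is now independent of $\mathcal{F}_{n+1}$ with mean zero. The paper offers no written proof for this corollary ("it is easy to check"), and your argument supplies exactly the omitted verification, including the only mildly delicate points (the sign convention $b_{ii}=1$ and the vanishing of $\sum_{l=0}^{n-1}b(n,l)a(l,k)$ because $l=n$ is excluded from the range).
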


\end{remark}

\section{Application to linear quadratic control}
In this section, we consider the following linear quadratic (LQ) optimal control problem, the state equation is 
\begin{align}\label{4.1}
\left\{\begin{array}{ll}
X_{n+1}=X_n+A_nX_n+B_nu_n+\left[C_nX_n+D_nu_n\right]\xi_n^H,\, 0\le n\le N-1,
\\X_0=x,
\end{array}\right.
\end{align}
with the cost function 
\begin{align}\label{4.2}
J(u)=\frac{1}{2}\mathbb{E}\left[\sum_{n=0}^{N-1}\left(Q_nX_n^2+R_nu^2_n\right)+GX_N^2\right],
\end{align}
where $Q_n,G\ge 0$ and $R_n>0$ for $n=0,1,...,N-1$.

According to the optimal system (\ref{sys}), the adjoint equation is
\begin{align}\label{4.3}
\left\{\begin{array}{ll}
p_n+q_n\eta_n=p_{n+1}+A_{n+1}p_{n+1}+b(n+1,n+1)C_{n+1}q_{n+1}\\
\qquad\qquad\qquad+Q_{n+1}X^*_{n+1}+C_{n+1}p_{n+1}\xi_{n+1}^H,
\\p_N=GX^*_N,\\
q_N=0,\\
A_N=C_N=Q_N=0.
\end{array}\right.
\end{align}
The optimal control should satisfy
\begin{align*}
B_np_n+D_np_n\sum_{k=0}^{n-1}c(n,k)\xi_k^H+b(n,n)D_nq_n+R_nu_n^*=0,
\end{align*}
i.e.,
\begin{align}\label{4.4}
u_n^*=-R_n^{-1}\left[B_np_n+D_np_n\sum_{k=0}^{n-1}c(n,k)\xi_k^H+b(n,n)D_nq_n\right].
\end{align}
\begin{theorem}
Let $(p,q)$ be the solution of BS$\Delta$E (\ref{4.3}). Then function (\ref{4.4}) is the unique optimal control for control problem (\ref{4.1}), (\ref{4.2}).
\end{theorem}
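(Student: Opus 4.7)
The plan is to combine the stochastic maximum principle from the previous section, which supplies (4.4) as a necessary condition, with the strict convexity and coercivity of the quadratic cost $J$, upgrading that necessity to sufficiency and forcing uniqueness of the minimizer.

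First I would verify that the affine drift $b(n,x,u)=A_nx+B_nu$, diffusion $\sigma(n,x,u)=C_nx+D_nu$, running cost $l(n,x,u)=\tfrac{1}{2}(Q_nx^2+R_nu^2)$ and terminal cost $\Phi(x)=\tfrac{1}{2}Gx^2$ fulfil (H3.1)--(H3.4) under the standing convention $A_N=B_N=C_N=D_N=Q_N=0$. The general maximum principle of Section~3 then applies, so any optimal $u^*$ obeys the variational inequality stated there. Because in this LQ setting $\mathbb{U}$ is taken to be $\mathbb{R}$, the inequality collapses (by the remark following the SMP) to an equation, which after dividing by $R_n>0$ is exactly (4.4).

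Next I would prove existence and uniqueness of a minimizer by convex analysis. Since (4.1) is affine in $(X,u)$, the map $u\mapsto X^u$ is affine and hence $J$ is a quadratic functional on $L^2_{\mathcal{F}}(0,N-1)$. Expanding $J(u^*+v)-J(u^*)$ through the telescoping duality identity (3.7)--(3.9) adapted to the quadratic data, the linear term in $v$ equals
\begin{align*}
\mathbb{E}\sum_{n=0}^{N-1}\left[B_np_n+D_np_n\sum_{k=0}^{n-1}c(n,k)\xi_k^H+b(n,n)D_nq_n+R_nu_n^*\right]v_n,
\end{align*}
which vanishes when (4.4) holds, while the quadratic part in $v$ contains the strictly positive term $\tfrac{1}{2}\mathbb{E}\sum_{n=0}^{N-1}R_nv_n^2$ together with the nonnegative contributions from $Q_n$ and $G$. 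Thus $R_n>0$ yields both strict convexity and coercivity of $J$, delivering a unique minimizer and simultaneously showing that any admissible control satisfying (4.4) is that minimizer.

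The principal technical obstacle is the coupling of $X^*$, $(p,q)$ and $u^*$: the adjoint (4.3) depends on $X^*$, $X^*$ solves (4.1) with $u=u^*$, and $u^*$ is given by (4.4) in terms of $(p,q)$. I would sidestep a direct fixed-point argument by producing $u^*$ first from the convex optimization above, then defining $X^*$ via the solvability lemma for the state S$\Delta$E and $(p,q)$ via the BS$\Delta$E existence theorem of Section~2; the resulting triple automatically realises (4.4) by the necessary condition, and uniqueness propagates through those two well-posedness statements. A more constructive route would use a Riccati ansatz $p_n=K_nX_n^*+\phi_n$ with deterministic $K_n$ and an $\mathcal{F}_n$-measurable correction $\phi_n$ absorbing the fractional memory $\sum_k c(n,k)\xi_k^H$, but this explicit decoupling is not needed for the statement as given.
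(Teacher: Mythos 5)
Your proposal is correct and matches the paper's argument in its essentials: both rest on the duality (telescoping) computation with the adjoint pair $(p,q)$ to annihilate the first-order term of $J(u)-J(u^*)$, and on the strict positivity $R_n\ge\theta>0$ to force strictness. The only cosmetic difference is that you read off uniqueness from the exact second-order term $\tfrac{1}{2}\mathbb{E}\sum_{n}R_n v_n^2$ of the quadratic expansion, whereas the paper runs a separate midpoint argument via the parallelogram identity on two putative optimal controls --- the two devices are equivalent.
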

\begin{proof}
Let us show the sufficiency of the optimal control first. Assume that $u\in\mathbb{U}$ is any other admissible control, $\{X_n\}$ is the corresponding state process, denote $\Delta X_n=X_n-X_n^*$ and $\Delta u_n=u_n-u^*_n$, then
\begin{align}\label{4.1}
\left\{\begin{array}{ll}
\Delta X_{n+1}=\Delta X_n+A_n\Delta X_n+B_n\Delta u_n+[C_n\Delta X_n+D_n\Delta u_n]\xi_n^H
\\\Delta X_0=0,
\end{array}\right.
\end{align}
and 
\begin{align*}
&p_{n+1}\Delta X_{n+1}-p_n\Delta X_n\\
=&-\Delta X_{n+1}\big[A_{n+1}p_{n+1}+b(n+1,n+1)C_{n+1}q_{n+1}\notag\\
&\qquad\qquad+Q_{n+1}X^*_{n+1}+C_{n+1}p_{n+1}\xi_{n+1}^H\big]\notag\\
&+\Delta X_{n+1}q_n\eta_n+p_n\left[A_n\Delta X_n+B_n\Delta u_n\right]\notag\\
&+p_n\left[C_n\Delta X_n+D_n\Delta u_n\right]\xi_n^H\notag\\
=&-\left[A_{n+1}p_{n+1}\Delta X_{n+1}-A_np_n\Delta X_n\right]\notag\\
&-\left[C_{n+1}p_{n+1}\Delta X_{n+1}\xi_{n+1}^H-C_np_n\Delta X_n\xi_n^H\right]\notag\\
&-\left[b(n+1,n+1)C_{n+1}q_{n+1}\Delta X_{n+1}-C_nq_n\Delta X_n\eta_n\xi_n^H\right]\notag\\
&-Q_{n+1}X^*_{n+1}\Delta X_{n+1}+B_np_n\Delta u_n+D_np_n\Delta u_n\xi_{n}^H\notag\\
&+D_nq_n\Delta u_n\eta_n\xi_n^H.
\end{align*}
So that
\begin{align*}
&\mathbb{E}\left(GX^*_N\Delta X_N\right)\\=&\sum_{n=0}^{N-1}\mathbb{E}\Big[-Q_{n+1}X^*_{n+1}\Delta X_{n+1}+B_np_n\Delta u_n+D_np_n\Delta u_n\xi_{n}^H
+D_nq_n\Delta u_n\eta_n\xi_n^H\Big]\\
=&\sum_{n=0}^{N-1}\mathbb{E}\Big[-Q_{n+1}X^*_{n+1}\Delta X_{n+1}+B_np_n\Delta u_n\\
&\qquad\qquad+D_np_n\Delta u_n\sum_{k=0}^{n-1}c(n,k)\xi_k^H
+b(n,n)D_nq_n\Delta u_n\eta_n\Big]
\\=&\sum_{n=0}^{N-1}\mathbb{E}\Big[-Q_{n}X^*_{n}\Delta X_{n}-R_nu^*_n\Delta u_n\Big].
\end{align*}
Then
\begin{align*}
J(u)-J(u^*)=&\frac{1}{2}\mathbb{E}\left[\sum_{n=0}^{N-1}\left(Q_n(X_n^2-X_n^{*2})+R_n(u^2_n-u_n^{*2})\right)+G\left(X_N^2-X_N^{*2}\right)\right]\\
\ge &\mathbb{E}\left[\sum_{n=0}^{N-1}\left(Q_nX_n^*\Delta X_n+R_nu^*_n\Delta u_n\right)+GX_N^*\Delta X_N\right]\\
=&0,
\end{align*}
which shows $u^*$ is an optimal control. 

Then we show uniqueness of the optimal control. Let  both $u_t^{*,1}$ and $u_t^{*,2}$ are optimal control processes, $X_t^1$ and $X_t^2$ are corresponding state processes, respectively. It is easy to check $\frac{X_t^1+X_t^2}{2}$ is the corresponding state process to $\frac{u_t^{*,1}+u_t^{*,2}}{2}$. Assume that there exists constants $\theta>0, \alpha\ge 0$, such that $R_t\ge \theta$ and 
\begin{align*}
    J(u_t^{*,1})=J(u_t^{*,2})=\alpha.
\end{align*}
Using $a^2+b^2=2\left[(\frac{a+b}{2})^2+(\frac{a-b}{2})^2\right]$, we have that

\begin{align*}
2\alpha=&J(u^{*,1})+J(u^{*,2})\\
=&\frac{1}{2}\mathbb{E}\sum_{n=0}^{N-1} \Big[Q_n(X_n^1X_n^1+X_n^2X_n^2)+R_n(u_n^{*,1}u_n^{*,1}+u_n^{*,2}u_n^{*,2})\Big]\\
&+\frac{1}{2}\mathbb{E}G(X_N^1X_N^1+X_N^2X_N^2)\\
\ge&\mathbb{E}\sum_{n=0}^{N-1} \left[Q_n\Big(\frac{X_n^1+X_n^2}{2}\Big)^2+R_n\Big(\frac{u_n^{*,1}+u_n^{*,2}}{2}\Big)^2\right]\\
&+\mathbb{E}G\Big(\frac{X_N^1+X_N^2}{2}\Big)^2+\mathbb{E}\sum_{n=0}^{N-1}R_n\Big(\frac{u_n^{*,1}-u_n^{*,2}}{2}\Big)^2\\
=&2J\Big(\frac{u^{*,1}+u^{*,2}}{2}\Big)+\mathbb{E}\sum_{n=0}^{N-1}R_n\Big(\frac{u_n^{*,1}-u_n^{*,2}}{2}\Big)^2\\
\ge&2\alpha+\frac{\theta}{4}\mathbb{E}\sum_{n=0}^{N-1}|u_t^{*,1}-u_t^{*,2}|^2.
\end{align*}
Thus, we have 
\begin{align*}
\mathbb{E}\sum_{n=0}^{N-1}|u_t^{*,1}-u_t^{*,2}|^2=0,
\end{align*}
which shows the uniqueness of the optimal control.

\end{proof}

\section{Acknowledgments}

This work was supported by National Key R$\&$D Program of China (Grant number 2023YFA1009200) and National Science Foundation of China (Grant numbers 12471417).

\FloatBarrier
\bibliography{main}
\end{document}